\pdfoutput=1
\RequirePackage{ifpdf}
\ifpdf 
\documentclass[pdftex]{sigma}
\else
\documentclass{sigma}
\fi

\numberwithin{equation}{section}

\newtheorem{Teo}{Theorem}[section]
\newtheorem{Pro}[Teo]{Proposition}
\newtheorem{Claim}[Teo]{Claim}

{ \theoremstyle{definition}
\newtheorem{Def}[Teo]{Definition}
\newtheorem{Ob}[Teo]{Remark}}

\newcommand{\cA}{\mathcal{A}}
\newcommand{\tcA}{\tilde{\mathcal{A}}}
\newcommand{\bcA}{\bar{\mathcal{A}}}
\newcommand{\C}{\mathbb{C}}
\newcommand{\N}{\mathbb N}
\newcommand{\R}{\mathbb R}
\newcommand{\ep}{\varepsilon}
\newcommand{\be}{\beta}
\newcommand{\de}{\delta}
\newcommand{\De}{\Delta}
\newcommand{\lam}{\lambda}
\newcommand{\Lam}{\Lambda}
\newcommand{\diag}{\operatorname{diag}}

\begin{document}

\allowdisplaybreaks

\newcommand{\arXivNumber}{1510.05979}

\renewcommand{\PaperNumber}{104}

\FirstPageHeading

\ShortArticleName{Continuous Choreographies as Limiting Solutions of $N$-body Type Problems}

\ArticleName{Continuous Choreographies as Limiting Solutions\\ of $\boldsymbol{N}$-body Type Problems with Weak Interaction}

\Author{Reynaldo CASTANEIRA~$^\dag$, Pablo PADILLA~$^\dag$ and H\'ector S\'ANCHEZ-MORGADO~$^\ddag$}

\AuthorNameForHeading{R.~Castaneira, P.~Padilla and H.~S\'anchez-Morgado}

\Address{$^\dag$~Instituto de Investigaciones en Matem\'aticas Aplicadas y en Sistemas, UNAM,\\
\hphantom{$^\dag$}~M\'exico D.F.~04510, M\'exico}
\EmailD{\href{mailto:r.castaneira@matem.unam.mx}{r.castaneira@matem.unam.mx}, \href{mailto:pablo@mym.iimas.unam.mx}{pablo@mym.iimas.unam.mx}}
\Address{$^\ddag$~Instituto de Matem\'aticas, UNAM, M\'exico D.F.~04510, M\'exico}
\EmailD{\href{mailto:hector@matem.unam.mx}{hector@matem.unam.mx}}

\ArticleDates{Received October 20, 2015, in f\/inal form October 29, 2016; Published online October 31, 2016}

\Abstract{We consider the limit $N\to +\infty$ of $N$-body type problems with weak interaction, equal masses and $-\sigma$-homogeneous potential, $0<\sigma<1$. We obtain the integro-dif\/ferential equation that the motions must satisfy, with limit choreographic solutions corresponding to travelling
waves of this equation. Such equation is the Euler--Lagrange equation of a~corresponding limiting action functional. Our main result is that the circle is the absolute minimizer of the action functional among zero mean (travelling wave) loops of class~$H^1$.}

\Keywords{$N$-body problem; continuous coreography; Lagrangian action}

\Classification{70F45; 70G75; 70F10}

\section{Introduction}
It is well known that the planar regular $N$-gon relative equilibrium is a solution of the equations of motion for the Newtonian $N$-body problem with $N\geq3$ and equal mass bodies \cite{Barutello-Terracini, C-D, Perko-Walter}. We consider $N$-body type problems with weak interaction, equal masses in~$\mathbb{R}^d$, $d\geq2$, and $-\sigma$-homogeneous potential, $0 < \sigma< 1$, and study the resulting equation of motion when $N\to +\infty$, which we
will refer to as {\em continuous system}, as well as a particular type of solutions which we call {\em continuous choreographies} using a~variational approach. We can roughly say that a continuous choreography is a~limiting conf\/iguration of classical choreographies when the number of particles grows without limit and indeed the circle turns out to be the continuous choreography associated to regular $N$-gons on the plane. G.~Buck~\cite{Buck} considered curves which are locally approximated by solutions of the $N$-body problem. There is an essential dif\/f\/iculty in trying to study a continuous curve of particles
interacting under the Newtonian law, since the combined force exerted by the neighbours blows up, except at points where the curvature vanishes. It seems that overcome this dif\/f\/iculty is impossible. Therefore we conf\/ined ourselves to consider weak interaction.

The paper is structured as follows: In the introduction we recall the variational approach of the $N$-body problem, as well as the particular type of solutions that we consider in this paper, the choreographic solutions. In Section~\ref{sec:cont-chor-as} we obtain the limit equation of the $N$-body problem when $N\to +\infty$ and look at the particular case of travelling wave type continuous distributions of masses. Then we consider the limiting action functional and its critical points. In Section~\ref{section3} we f\/irst show that the circle is a continuous choreography for $0 <\sigma < 1$, and then prove our main result that it is in fact the absolute minimizer of the action functional among zero mean loops of class $H^1$ with period~1. This is the continuous distributions of masses counterpart of Theorem~1 in~\cite{Barutello-Terracini} according to which, under the choreographic restriction, the Lagrangian action attains its absolute minimum at the planar regular $N$-gon relative equilibrium.

\subsection[Choreographies for the $N$-body problem]{Choreographies for the $\boldsymbol{N}$-body problem}
The {\em $N$-body problem} with homogeneous potential of degree $\sigma\in(-1,0)$, is the study of the dynamics of N positive mass points $m_1, \dots,m_N$ moving in~$\R^d$, $d\ge 2$, interacting according to equations
\begin{gather}\label{Newton}
m_i\ddot{q}_{i}=-\underset{ j\neq i}{\sum_{ j=1,\dots,N}}\sigma m_im_j\frac{q_{i}-q_{j}}{\|q_{i}-q_{j}\|^{2+\sigma}}, \qquad
i=1,\dots,N,
\end{gather}
where $q_{i}(t) \in \R^d$ denotes the position of $m_i$ at time $t$. The case $\sigma=1$ is the {\em Newtonian $N$-body problem}.

Equivalently, we have the second-order equation
\begin{gather}\label{C}
M\ddot{q}=\nabla U(q), \qquad q=(q_{1}, \dots,q_{N}),
\end{gather}
where $U\colon \R^{d\times N}\to [0,\infty]$ given by
\begin{gather*}
U(q)=\sum_{1\leq i<j\leq N}\frac{m_i m_j}{\|q_{i}-q_{j}\|^\sigma},
\end{gather*}
is the {\em potential function} (the negative of the {\em potential energy}), $\nabla$ is the {\em gradient} in $\R^{d\times N}$, $M=\diag[m_1I_d, \dots,m_NI_d]$.

Since the {\em center of mass} describes an uniform motion, we can f\/ix it at the origin, and take
\begin{gather*}
\mathcal{X}=\left\{q\in \R^{d\times N}\Big|\sum_{i=1}^N m_iq_i =0\right\}
\end{gather*}
as conf\/iguration space for \eqref{C}.

According to the variational approach, solving \eqref{Newton} is equivalent to seeking the critical points of the Lagrangian action
\begin{gather}\label{Action}
A^{\sigma}\colon \ H^1([0,1],\mathcal{X}) \to[0,+\infty], \qquad A^{\sigma}(q)=\int_0^1 L^{\sigma}(q(t),\dot{q}(t))dt,
\end{gather}
where $L^\sigma\colon T\mathcal{X}\to[0,+\infty]$ is def\/ined on the {\em tangent bundle}
\begin{gather*}
T\mathcal{X}=\big\{(q,v)\,\big|\, q\in \mathcal{X},\; v \in \R^{d\times N}\big\}
\end{gather*}
as
\begin{gather*}
L^{\sigma}(q,v):=K(v)+U(q)=\frac{1}{2}\sum_{i=1}^N m_i \|v_i\|^2+ \sum_{1\leq i<j\leq N}\frac{m_i m_j}{\|q_{i}-q_{j}\|^{\sigma}}.
\end{gather*}
Equations \eqref{Newton} are the {\em Euler--Lagrange equations} associated to the functional~\eqref{Action}.

There is an important class of solutions of \eqref{Newton} known as {\em choreographies}. A~{\em choreography} of~\eqref{Newton} is a~periodic solution~$q(t)$ whose orbit is the union of closed curves, each of these is the trajectory of at least two bodies. If the solution consists of only one closed curve, then we call this solution a {\em simple choreography}.

\begin{Def}[see \cite{CGMS,Chenciner-Montgomery, Yu}]
We say that a solution of the $N$-body problem \eqref{Newton} is a simple choreography if it is $T$-periodic and all bodies move on the same curve, interchanging their mutual positions after a time f\/ixed, $\tau=T/N$, that is, there exists a function $x\colon \R\to \R^{d}$ such that:
\begin{gather*}
q_{i}(t)=x(t+(i-1) \tau), \qquad i=1,\dots,N, \qquad t \in \R.
\end{gather*}
\end{Def}

An example of a simple choreography is the relative equilibrium associated to Lagrange's equilateral triangle and this fact was extended in~\cite{Perko-Walter} to the case of $N$ equal masses. For details on choreographies we refer the reader to~\cite{Yu} and the references therein.

\section[Continuous choreographies as the limit $N\to\infty$ of $N$-body type problems]{Continuous choreographies as the limit $\boldsymbol{N\to\infty}$\\ of $\boldsymbol{N}$-body type problems}\label{sec:cont-chor-as}
\subsection{The equation of motion}

Consider the $N$-body problem \eqref{Newton} with equal masses ($m_1=\cdots=m_N=1/N$) on $\R^d$ and $0<\sigma<1$
 \begin{gather}\label{Nplano}
\ddot{q}_{i}(t)=-\underset{j\neq i}{\sum_{j=1,\dots,N}}\frac{\sigma}N
\frac{q_{i}(t)-q_{j}(t)}{\|q_{i}(t)-q_j(t)\|^{2+\sigma}}, \qquad i=1,\dots,N.
\end{gather}
Consider \looseness=-1 $q\colon [0,1]\times[a,b]\to\R^d$, periodic on the f\/irst variable and twice dif\/ferentiable on the se\-cond variable. Let $\De s=1/N$ and suppose that the position of the $i$-th mass at time~$t$ is given by
\begin{gather*}
q_i(t)=q((i-1)\De s,t),
\end{gather*}
then equation \eqref{Nplano} becomes
\begin{gather}\label{SD}
\ddot q_i(t)=-\underset{j\neq i}{\sum_{j=1,\dots,N}}\sigma \frac{q((i-1)\De s,t)-q((j-1)\De s,t)}{\|q((i-1)\De s,t)-q((j-1)\De s,t)\|^{2+\sigma}} \De s, \qquad i=1,\dots, N.
\end{gather}
So when $N\to +\infty$, $\De s\to 0$, the limit of the right-hand side of equation \eqref{SD} can be written as a Cauchy principal value
\begin{gather}\label{SC}
\frac{\partial^2 q}{\partial t^2}(s,t)=-
\lim_{\de\to 0}\int_{s+\de}^{1+s-\de}\sigma
\frac{q(s,t)-q(r,t)}{\|q(s,t)-q(r,t)\|^{2+\sigma}}dr.
\end{gather}
A natural way to make the positions $q_i(t)$ satisfy the coreography condition is to take $q(s,t)$ of travelling wave type, that is
\begin{gather}\label{Onda}
q(s,t)=y(s-vt),
\end{gather}
where $y$ is a 1-periodic function in $C^2(\mathbb{R},\mathbb{R}^d)$.

Using expression \eqref{Onda} in equation \eqref{SC}, we obtain that $y$ must satisfy
\begin{gather}\label{SC1}
v^2\ddot{y}(s)=-\lim_{\de\to 0}\int_{s+\de}^{1+s-\de}\sigma\frac{y(s)-y(r)}{\|y(s)-y(r)\|^{2+\sigma}}dr, \qquad s\in\R.
\end{gather}
We denote by $C_1^2(\R,\R^d)$ the set of 1-periodic functions in $C^2(\R,\R^d)$ and by $H_1^1(\R,\R^d)$ the set of 1-periodic functions whose restriction to $[0,1]$ is in $H^1([0,1],\R^d)$.
\begin{Claim}
Assume $y\in C_1^2(\R,\R^d)$ defines a regular simple closed curve. Then the right-hand side of \eqref{SC1} is well defined.
\end{Claim}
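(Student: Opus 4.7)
The plan is to exhibit the limit as a cancellation of an odd leading singularity against its symmetric complement, plus an absolutely convergent correction. Away from $r=s$ the integrand is continuous: the hypothesis that $y$ parametrizes a regular simple closed curve guarantees $y(s)\neq y(r)$ whenever $r-s\notin\mathbb Z$, so the denominator stays bounded below on any compact subset of the periodic domain that avoids the diagonal.

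First I would translate via $h=r-s$, which recasts the integral as one over $h\in[\de,1-\de]$. Exploiting the $1$-periodicity of $y$ to shift the piece $h\in[1/2,1-\de]$ by $-1$, the integral becomes
\[
\int_{\de\le |h|\le 1/2}\frac{y(s)-y(s+h)}{\|y(s)-y(s+h)\|^{2+\sigma}}\,dh,
\]
i.e.~an integration over a set symmetric about the singular point $h=0$. This symmetrization is what will enable the cancellation.

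Next I would Taylor-expand at $h=0$. Since $y\in C_1^2(\R,\R^d)$ and $\inf_s\|\dot y(s)\|>0$ by regularity (uniform by compactness of $\R/\mathbb Z$),
\[
y(s)-y(s+h)=-\dot y(s)\,h+O\bigl(h^2\bigr),\qquad \|y(s)-y(s+h)\|^{2+\sigma}=\|\dot y(s)\|^{2+\sigma}|h|^{2+\sigma}\bigl(1+O(|h|)\bigr),
\]
so dividing yields
\[
\frac{y(s)-y(s+h)}{\|y(s)-y(s+h)\|^{2+\sigma}}=-\frac{\dot y(s)}{\|\dot y(s)\|^{2+\sigma}}\,\frac{h}{|h|^{2+\sigma}}+R(s,h),\qquad |R(s,h)|=O\bigl(|h|^{-\sigma}\bigr).
\]
The principal term $h/|h|^{2+\sigma}$ is odd in $h$, so its integral over $\{\de\le|h|\le 1/2\}$ vanishes for every $\de>0$. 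The remainder $R(s,\cdot)$ is continuous on $[-1/2,1/2]\setminus\{0\}$ by simplicity of the curve and dominated by a constant multiple of $|h|^{-\sigma}$, which is integrable because $\sigma<1$. Dominated convergence then gives $\lim_{\de\to 0}\int_{\de\le|h|\le 1/2}R(s,h)\,dh=\int_{-1/2}^{1/2}R(s,h)\,dh$, so the right-hand side of \eqref{SC1} exists.

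The main technical obstacle is carrying the Taylor expansion of the denominator to sufficient order that the error term decays like $|h|^{-\sigma}$; a crude expansion would leave a non-integrable $|h|^{-1-\sigma}$ correction. The role of the hypotheses is then transparent: regularity supplies the nondegenerate leading behaviour $\dot y(s)\neq 0$, simplicity keeps the integrand bounded on the rest of the loop, and $\sigma<1$ delivers the local integrability of $|h|^{-\sigma}$.
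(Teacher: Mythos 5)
Your proposal is correct and follows essentially the same strategy as the paper: both symmetrize the domain about the singularity using periodicity, split off the odd leading term $\dot y(s)h/\|\dot y(s)h\|^{2+\sigma}$ whose symmetric integral vanishes, and bound the remainder by a constant multiple of $|h|^{-\sigma}$, which is integrable since $\sigma<1$. The paper packages the remainder estimate as the bounded difference quotient of the continuous function $F(t)=(\dot y(s)+tg(s,t))/\|\dot y(s)+tg(s,t)\|^{2+\sigma}$ rather than via explicit Taylor $O$-terms, but this is only a cosmetic difference.
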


\begin{proof}
There is a continuous function $g\colon \R\times[-\ep,\ep]\to\R^d$ such that
\begin{gather*}y(s+t)=y(s)+\dot{y}(s)t+g(s,t)t^2. \end{gather*}
Since $y$ is regular, for $s$ f\/ixed, the function
\begin{gather*}F(t)=\frac{(y(s+t)-y(s)) |t|^{2+\sigma}}{t\| y(s+t)-y(s)\|^{2+\sigma}}=
\frac{\dot{y}(s)+tg(s,t)}{\|\dot{y}(s)+tg(s,t)\|^{2+\sigma}}\end{gather*}
can be considered as a continuous function on $[-\ep,\ep]$ with $F(0)=\dot{y}(s)\|\dot{y}(s)\|^{-2-\sigma}$ and dif\/fe\-ren\-tiable at~$0$. Thus
\begin{gather*}G(t)=\left(\frac{y(s+t)-y(s)}{\| y(s+t)-y(s)\|^{2+\sigma}}
-\frac{\dot{y}(s)t}{\| \dot{y}(s)t\|^{2+\sigma}}\right)|t|^\sigma =\frac{F(t)-F(0)}{t}\end{gather*}
is bounded on $[-\ep,\ep]-\{0\}$ and then
\begin{gather*}\lim_{\de\to 0}\int_{-\ep}^{-\de}\frac{G(t)dt}{|t|^\sigma}+\int_{\de}^{\ep}\frac{G(t)dt}{|t|^\sigma}\end{gather*}
exists. Since $t\mapsto t|t|^{-2-\sigma}$ is an odd function we have
\begin{gather*}\int_{-\ep}^{-\de}\frac{tdt}{|t|^{2+\sigma}}+\int_{\de}^{\ep}\frac{tdt}{|t|^{2+\sigma}}=0,\end{gather*}
and then
 \begin{gather*}\lim_{\de\to 0}\left(\int_{s-\ep}^{s-\de}+\int_{s+\de}^{s+\ep}\right)\frac{(y(r)-y(s))dr}{\| y(s)-y(r)\|^{2+\sigma}}
=\lim_{\de\to 0}\int_{-\ep}^{-\de}\frac{G(t)dt}{|t|^\sigma}+\int_{\de}^{\ep}\frac{G(t)dt}{|t|^\sigma}.\tag*{\qed}
\end{gather*}
\renewcommand{\qed}{}
\end{proof}

\subsection{Variational approach to continuous choreographies (\ref{SC1})}
Consider the action functional
\begin{gather*}\cA^\sigma\colon \ \Lam\to[0,+\infty]\end{gather*}
given by
\begin{gather}\label{FC}
\cA^\sigma(y)=\int_0^1\frac{v^2}{2}\|\dot{y}(s)\|^2 ds +\frac{1}{2}\int_0^1\int_0^1 \frac{dr ds}{\|y(s)-y(r)\|^\sigma},
\end{gather}
where
\begin{gather*}\Lam:=\left\{y\in H_1^1\big(\R,\R^d\big) \Big |\int_0^1 y(s)ds=0\right\}.\end{gather*}
For $y\in\Lam$, we have that $\|y\|_{L^2[0,1]}\le\|\dot{y}\|_{L^2[0,1]}$ and therefore $\|y\|_{H^1[0,1]}$ is equivalent to $\|y\|_{L^2[0,1]}$.
\begin{Pro}
Assume $y\in C_1^2(\R,\R^d)$ defines a regular simple closed curve. Then $y$ is an extremal of the functional~\eqref{FC} if and only if it satisfies~\eqref{SC1}.
\end{Pro}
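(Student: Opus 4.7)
The plan is to compute the first variation of $\cA^\sigma$ and show that its vanishing against every admissible perturbation is equivalent to~\eqref{SC1}. Fix a test function $\phi\in C_1^2(\R,\R^d)$ with $\int_0^1\phi(s)\,ds=0$, so that $y+\ep\phi\in\Lam$ for all small $\ep$. Periodic integration by parts shows that the kinetic piece of $\frac{d}{d\ep}\big|_{0}\cA^\sigma(y+\ep\phi)$ contributes $-\int_0^1 v^2\langle\ddot y(s),\phi(s)\rangle\,ds$.

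For the potential piece, set $K(s,r):=(y(s)-y(r))\|y(s)-y(r)\|^{-2-\sigma}$. Regularity of the $C^2$ simple closed curve $y$ gives a uniform lower bound $\|y(s)-y(r)\|\ge c|s-r|$ (in the $1$-periodic sense), so the pointwise $\ep$-derivative of the potential integrand is dominated by a constant multiple of $|s-r|^{-\sigma}$, which is Lebesgue integrable on $[0,1]^2$ since $\sigma<1$. Dominated convergence therefore legitimizes differentiation under the double integral, yielding $-\frac{\sigma}{2}\iint_{[0,1]^2}\langle K(s,r),\phi(s)-\phi(r)\rangle\,dr\,ds$. Next, truncating to $\{|s-r|\ge\delta\}$ (where each of the two halves is now absolutely convergent), relabeling variables in the $\phi(r)$ half, and invoking the antisymmetry $K(r,s)=-K(s,r)$ collapses the two halves into one; in the limit $\delta\to 0$ the potential variation becomes
\begin{gather*}
-\sigma\int_0^1\bigg\langle\lim_{\delta\to 0}\int_{s+\delta}^{1+s-\delta}K(s,r)\,dr,\,\phi(s)\bigg\rangle ds.
\end{gather*}
The Claim guarantees the inner principal value exists for every $s$, and the uniform bounds arising from its explicit decomposition (together with compactness of $y([0,1])$) provide a Lebesgue dominant that lets the $\delta$-limit be pulled through the outer $s$-integration.

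Combining the two pieces,
\begin{gather*}
\frac{d}{d\ep}\bigg|_{0}\cA^\sigma(y+\ep\phi)=-\int_0^1\bigg\langle v^2\ddot y(s)+\sigma\lim_{\delta\to 0}\int_{s+\delta}^{1+s-\delta}\frac{y(s)-y(r)}{\|y(s)-y(r)\|^{2+\sigma}}\,dr,\,\phi(s)\bigg\rangle ds.
\end{gather*}
The fundamental lemma of the calculus of variations, in its zero-mean form, says this vanishes for every smooth $1$-periodic $\phi$ with $\int\phi=0$ if and only if the bracketed vector is constant in $s$. That constant is zero: $\int_0^1\ddot y\,ds=0$ by periodicity, and the $s$-integral of the inner principal value equals $\lim_{\delta\to 0}$ of the Fubini-swapped truncated integral, which vanishes by the antisymmetry $K(s,r)=-K(r,s)$. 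Hence $y$ is an extremal of~\eqref{FC} if and only if~\eqref{SC1} holds; the converse is obtained by reading the same chain of identities backwards. The main technical hurdle is the interplay of three limits — the $\ep$-derivative, the principal-value $\delta$-limit, and the outer $s$-integration — all reconciled by the bounds supplied by the Claim together with the regularity of~$y$.
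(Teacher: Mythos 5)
Your proof is correct, and while it follows the same overall strategy as the paper (compute the first variation, arrive at the principal-value Euler--Lagrange expression, conclude by the fundamental lemma), its technical organization differs in two genuine ways. First, the diagonal singularity: the paper truncates to $r\in[s+\de,1+s-\de]$ \emph{before} differentiating in $\ep$, differentiates the truncated integral, and then controls the leftover strip $|r-s|<\de$ by a fundamental-theorem-of-calculus estimate showing its contribution to the $\ep$-derivative is $O\big(\de^{1-\sigma}\big)$, sending $\de\to0$ only at the end. You instead differentiate under the full double integral in one stroke, using the chord bound $\|y(s)-y(r)\|\ge c|s-r|$ (valid for a regular simple closed $C^2$ curve) together with the Lipschitz bound on $\phi$ to produce the integrable dominant $C|s-r|^{-\sigma}$, and only then pass to the principal-value form via truncation and the antisymmetry $K(r,s)=-K(s,r)$. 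This buys a cleaner argument with no strip estimate, at the price of invoking the chord lower bound globally; the paper uses the same ingredients (regularity near the diagonal, simplicity of the curve away from it) but only implicitly. Second, the test functions: the paper varies against arbitrary $z\in C_1^2(\R,\R^d)$, which strictly speaking steps outside the constraint set $\Lam$ (harmless because $\cA^\sigma$ is translation invariant, though the paper does not remark on this); you stay inside $\Lam$ with zero-mean $\phi$, so the fundamental lemma only yields that the Euler--Lagrange expression is a \emph{constant} vector, and you then correctly kill the constant by averaging: $\int_0^1\ddot y\,ds=0$ by periodicity, and the $s$-average of the principal value vanishes by the same kernel antisymmetry. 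That extra step is exactly what the zero-mean restriction demands, and your treatment of it is the more scrupulous one. The two points you state loosely are standard and do hold: the dominant must be uniform for $\ep$ near $0$ (it is, since $\|y(s)-y(r)+\ep(\phi(s)-\phi(r))\|\ge\tfrac c2|s-r|$ for small $\ep$), and pulling the $\de$-limit through the outer $s$-integration requires the truncated inner integrals to be bounded uniformly in $s$, which follows from the decomposition in the Claim with constants made uniform by compactness and $|\dot y|\ge\be>0$.
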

\begin{proof}
 For $y\in C_1^2(\R,\R^d)$ and any $z\in C_1^2(\R,\R^d)$ we have
\begin{gather}
\frac{d\cA^\sigma(y+\ep z)}{d\ep}\Big|_{\ep=0}
= \frac{d\ }{d\ep}\left\{ \int_0^1\frac{v^2}{2}\|\dot{y}(s)+\ep\dot{z}(s)\|^2 ds\right\}_{\ep =0} \nonumber\\
\hphantom{\frac{d\cA^\sigma(y+\ep z)}{d\ep}\Big|_{\ep=0}=}{}
 +\frac{1}{2}\frac{d\ }{d\ep}\left\{\int_0^1\int_0^1 \frac{dr ds}{\|y(s)-y(r)+\ep(z(s)-z(r))\|^\sigma}\right\}_{\ep=0}.\label{PV2}
\end{gather}
Dif\/ferentiating and then integrating by parts the f\/irst term on the right-hand side of \eqref{PV2} we obtain
\begin{gather*}
\frac{d\ }{d\ep}\left\{\int_0^1\frac{v^2}{2}\|\dot{y}(s)+\ep\dot{z}(s)\|^2 ds\right\}_{\ep =0} =\int_0^1-v^2\ddot{y}(s)\cdot z(s)ds.
\end{gather*}
We now consider the second term on the right-hand side of~\eqref{PV2}. For $\de>0$ small
\begin{gather*}
 \int_0^1\int_0^1 \frac{dr ds}{\|y(s)-y(r)+\ep(z(s)-z(r))\|^\sigma}\\
\qquad {} =\int_0^1\int_{s-\de}^{1+ s-\de} \frac{dr ds}{\|y(s)-y(r)+\ep(z(s)-z(r))\|^\sigma}\\
\qquad {} =\int_0^1\left(\int_{s+\de}^{1+s-\de}+\int_{s-\de}^{s+\de}\right) \frac{dr ds}{\|y(s)-y(r) +\ep(z(s)-z(r))\|^\sigma}.
\end{gather*}
We have
\begin{gather*}
 \frac 12\frac{d\ }{d\ep} \left\{\int_0^1\int_{s+\de}^{1+s-\de}\frac{dr ds}{\|y(r)-y(s)+\ep(z(r)-z(s))\|^\sigma}\right\}_{\ep=0}\\
\qquad{} =-\frac 12 \int_0^1\int_{s+\de}^{1+s-\de}\frac{\sigma(y(s)-y(r))\cdot(z(s)-z(r))}{\|y(s)-y(r)\|^{2+\sigma}}drds\\
\qquad {} =-\frac 12\int_0^{1-\de}\int_{s+\de}^{1-\max(0,\de-s)}\frac{\sigma(y(s)-y(r))\cdot(z(s)-z(r))}{\|y(s)-y(r)\|^{2+\sigma}}drds\\
\qquad\quad{} -\frac 12\int_0^{1-\de}\int_{r+\de}^{1-\max(0,\de-r)}\frac{\sigma(y(s)-y(r))\cdot(z(s)-z(r))}{\|y(s)-y(r)\|^{2+\sigma}}dsdr\\
\qquad{} =-\int_0^1\int_{s+\de}^{1+s-\de}\frac{\sigma(y(s)-y(r)) \cdot z(s)}{\|y(s)-y(r)\|^{2+\sigma}}drds.
\end{gather*}
There are continuous functions $g,h\colon \R\times[-\ep,\ep]\to\R^d$ such that
\begin{gather*}y(t+s)=y(s)+\dot{y}(s)t+g(s,t)t^2,\qquad z(t+s)=z(s)+\dot{z}(s)t+h(s,t)t^2.\end{gather*}
Since $y$ is regular, $|\dot{y}(s)|\ge\be>0$, for $\de, \ep>0$ small we have that
\begin{gather*}|t|\le\de, \quad |u|\le\ep \ \Rightarrow \ |\dot{y}(s)+u\dot{z}(s)+(g(s,t)+uh(s,t))t|\ge\frac\be2.\end{gather*}
The fundamental theorem of calculus gives
\begin{gather*}
\left|\int_{s-\de}^{s+\de}\frac{dr}{\|y(s)-y(r)+\ep(z(s)-z(r))\|^\sigma}-\frac{dr}{\|y(s)-y(r)\|^\sigma}\right|\\
\qquad{} =\left|\int_{s-\de}^{s+\de}\int_0^\ep \sigma\frac{(y(s)-y(r)+u(z(s)-z(r)))\cdot(z(s)-z(r))}
{\|y(s)-y(r)+u(z(s)-z(r))\|^{2+\sigma}}du dr\right|\\
\qquad{}=\left|\int_{-\de}^\de\int_0^\ep \sigma\frac{ (\dot{y}(s)+u\dot{z}(s)+(g(s,t)+uh(s,t))t)\cdot(\dot{z}(s)+h(s,t)t)}
{\|\dot{y}(s) +u \dot{z}(s)+(g(s,t)+u h(s,t))t\|^{2+\sigma}|t|^\sigma}du dt\right|\\
\qquad{} \le C\int_{-\de}^\de\int_0^\ep|t|^{-\sigma} dudt=\frac{2C}{1-\sigma}\ep\de^{1-\sigma}
\end{gather*}
with $C$ constant. Thus, for $\de>0$ small we have
\begin{gather*}
 \frac{d\cA^\sigma(y+\ep z)}{d\ep}\Big|_{\ep=0} = \int_0^1-v^2\ddot{y}(s) \cdot z(s)ds+O\big(\de^{1-\sigma}\big)\\
\hphantom{\frac{d\cA^\sigma(y+\ep z)}{d\ep}\Big|_{\ep=0} =}{}  -\int_0^1\int_{s+\de}^{1+s-\de}\frac{ \sigma(y(s)-y(r)) \cdot z(s)}{\|y(s)-y(r)\|^{2+\sigma}}drds,
\end{gather*}
and then
\begin{gather*}
\frac{d\cA^\sigma(y+\ep z)}{d\ep}\Big|_{\ep=0}=-\int_0^1\left[v^2\ddot{y}(s)+\lim_{\de\to 0}
\int_{s+\de}^{1+s-\de}\frac{\sigma(y(s)-y(r))}{\|y(s)-y(r)\|^{2+\sigma}}dr\right]\cdot z(s)ds.
\end{gather*}
Therefore, $y$ satisf\/ies the condition
\begin{gather*}
\frac{d\cA^\sigma(y+\ep z)}{d\ep}\Big|_{\ep=0}=0
\end{gather*}
for any $z\in C_1^2(\R,\R^d)$, if and only if it satisf\/ies \eqref{SC1}.
\end{proof}

\section{Circular choreography as minimizer of the action}\label{section3}

In this section we consider circular planar curves as solutions of equation \eqref{SC1} and in fact as absolute minimizers of the action functional. This is motivated by papers \cite{Perko-Walter, Xie-Zhang}, which prove that the planar regular $N$-gon relative equilibrium is a solution of the $N$-body
problem, and mainly by Theorem~1 in~\cite{Barutello-Terracini}. We naturally follow the ideas from that paper.
\begin{Pro} The function given by $x(s):=e^{2\pi i s}$ is a solution of \eqref{SC1} on the plane $\R^2\cong\C$ if and only if
\begin{gather}\label{circle}
v^2=\frac{\sigma}{4\pi^2} \lim_{\de\to 0}\int_\de^{1-\de}\frac{1-e^{2\pi it}}{|1-e^{2\pi it}|^{2+\sigma}}dt
=\frac{\sigma}{8\pi^2}\int_0^1\frac{dt}{(2\sin(\pi t))^\sigma}.
\end{gather}
\end{Pro}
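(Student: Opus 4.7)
The plan is a direct substitution of $y(s)=e^{2\pi i s}$ into \eqref{SC1}, exploiting translation invariance and a symmetry to reduce everything to a single real integral.

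First I would compute $\ddot y(s)=-4\pi^{2}e^{2\pi i s}$ and, on the right-hand side of \eqref{SC1}, change variables $r=s+t$ so the principal-value integral becomes an integral over $t\in[\de,1-\de]$. The crucial point is the factorisation
\begin{gather*}
y(s)-y(s+t)=e^{2\pi i s}\bigl(1-e^{2\pi i t}\bigr),\qquad \|y(s)-y(s+t)\|=\bigl|1-e^{2\pi i t}\bigr|,
\end{gather*}
which pulls the factor $e^{2\pi i s}$ out of the integrand. After dividing both sides by $-e^{2\pi i s}$, equation \eqref{SC1} becomes the $s$-independent scalar relation
\begin{gather*}
4\pi^{2}v^{2}=\sigma\lim_{\de\to 0}\int_{\de}^{1-\de}\frac{1-e^{2\pi it}}{|1-e^{2\pi it}|^{2+\sigma}}dt,
\end{gather*}
and convergence of the limit is already guaranteed by the Claim applied to the unit circle. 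This gives the first equality in \eqref{circle} and shows the equivalence is genuinely iff.

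Next I would verify that this limit is in fact a real number equal to $\tfrac12\int_0^1(2\sin\pi t)^{-\sigma}dt$. For this I would split the integrand into real and imaginary parts, using
\begin{gather*}
1-e^{2\pi i t}=2\sin^{2}\pi t-i\sin 2\pi t,\qquad \bigl|1-e^{2\pi i t}\bigr|=2|\sin\pi t|.
\end{gather*}
The imaginary part $-\sin 2\pi t/(2|\sin\pi t|)^{2+\sigma}$ is antisymmetric under $t\mapsto 1-t$, hence its integral over $[\de,1-\de]$ vanishes. The real part simplifies to $2\sin^{2}\pi t/(2|\sin\pi t|)^{2+\sigma}=\tfrac{1}{2^{1+\sigma}|\sin\pi t|^{\sigma}}$, whose integral over $[0,1]$ converges (since $\sigma<1$) and equals $\tfrac12\int_0^1(2\sin\pi t)^{-\sigma}dt$. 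Multiplying by $\sigma/(4\pi^{2})$ yields the second equality in \eqref{circle}.

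No step looks hard: the main thing to be careful about is checking that the odd imaginary piece indeed integrates to zero in the symmetric principal-value sense (so the final $v^{2}$ is real and positive), and that the change of variable $r=s+t$ is compatible with the principal-value cut-off around $r=s$, which it is because the interval of integration $[s+\de,1+s-\de]$ is chosen precisely to excise a symmetric neighbourhood of the singularity.
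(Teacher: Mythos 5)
Your proposal is correct and follows essentially the same route as the paper: substitute $y(s)=e^{2\pi is}$, use the change of variable $r=s+t$ to factor out $e^{2\pi is}$ from the principal-value integral, and reduce \eqref{SC1} to the $s$-independent scalar identity $4\pi^2v^2=\sigma\lim_{\de\to 0}\int_\de^{1-\de}\frac{1-e^{2\pi it}}{|1-e^{2\pi it}|^{2+\sigma}}dt$, which is exactly the paper's argument. You go slightly further than the paper by explicitly verifying the second equality in \eqref{circle} (splitting into real and imaginary parts, killing the odd part by the symmetry $t\mapsto 1-t$, and reducing the even part to $\frac12\int_0^1(2\sin\pi t)^{-\sigma}dt$), a computation the paper states without proof; this addition is correct and welcome.
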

\begin{proof}
We have
\begin{gather*}
\int_{s+\de}^{1+s-\de}\frac{e^{2\pi i s}-e^{2\pi i r}}{|e^{2\pi i s}-e^{2\pi i r}|^{2+\sigma}}dr
 =e^{2\pi i s}\int_{s+\de}^{1+s-\de}\frac{1-e^{2\pi i(r-s)}}{|1-e^{2\pi i (r-s)}|^{2+\sigma}}dr\\
\hphantom{\int_{s+\de}^{1+s-\de}\frac{e^{2\pi i s}-e^{2\pi i r}}{|e^{2\pi i s}-e^{2\pi i r}|^{2+\sigma}}dr}{}
=e^{2\pi i s}\int_\de^{1-\de}\frac{1-e^{2\pi i t}}{|1-e^{2\pi i t}|^{2+\sigma}}dt,
\end{gather*}
and
\begin{gather*}\ddot{x}(s)=-4\pi^2 e^{2\pi i s}.\end{gather*}
Thus, \eqref{SC1} is equivalent to \eqref{circle}
\end{proof}

\begin{Pro}\label{holder}
For $\be>0$ let $\mu\in C((0,1),\R^+)\cap L^{\be/\be+1}(0,1)$. For any $\xi\colon (0,1)\to\R^+$ we have
\begin{gather*}\left(\int_0^1\mu^{\be/\be+1}\right)^{\be+1}\le \left(\int_0^1\mu \xi\right)^\be\int_0^1\xi^{-\be}, \end{gather*}
and equality holds if an only if $\mu\xi^{\be+1}$ is constant.
\end{Pro}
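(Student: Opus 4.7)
The plan is to recognize this as a direct consequence of H\"older's inequality after an appropriate factorization of the integrand $\mu^{\beta/(\beta+1)}$. The key observation is that we want the right-hand side to involve $\int \mu\xi$ and $\int \xi^{-\beta}$, so we should split $\mu^{\beta/(\beta+1)}$ as a product of a power of $\mu\xi$ and a power of $\xi$ in such a way that the $\xi$-factors cancel.

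The natural choice is
\begin{gather*}
\mu^{\beta/(\beta+1)} = (\mu\xi)^{\beta/(\beta+1)}\,\xi^{-\beta/(\beta+1)},
\end{gather*}
and then apply H\"older with conjugate exponents $p=(\beta+1)/\beta$ and $q=\beta+1$ (note $1/p+1/q=1$). This gives
\begin{gather*}
\int_0^1\mu^{\beta/(\beta+1)} \le \left(\int_0^1\mu\xi\right)^{\beta/(\beta+1)}\left(\int_0^1\xi^{-\beta}\right)^{1/(\beta+1)},
\end{gather*}
since the exponents work out to $(\beta/(\beta+1))\cdot p = 1$ on the $\mu\xi$ factor and $(\beta/(\beta+1))\cdot q = \beta$ on the $\xi^{-1}$ factor. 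Raising both sides to the power $\beta+1$ yields the desired inequality.

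For the equality case, I would invoke the standard equality condition in H\"older: equality holds iff $(\mu\xi)^{\beta/(\beta+1)\cdot p}$ and $\xi^{-\beta/(\beta+1)\cdot q}$ are proportional a.e., i.e., $\mu\xi$ is proportional to $\xi^{-\beta}$, which rearranges to $\mu\xi^{\beta+1}$ being constant. The integrability hypothesis $\mu\in L^{\beta/(\beta+1)}(0,1)$ ensures the left-hand side is finite; if $\int\mu\xi$ or $\int\xi^{-\beta}$ is infinite the inequality is trivial, so there is no real obstacle. The only mild subtlety is bookkeeping of the exponents, but no analytic difficulty arises.
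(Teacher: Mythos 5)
Your proof is correct and uses exactly the same factorization as the paper: the authors also write $\mu^{\be/(\be+1)}=(\mu\xi)^{\be/(\be+1)}\xi^{-\be/(\be+1)}$, apply H\"older with the conjugate pair $(\be+1)/\be$ and $\be+1$, and read off the equality condition as proportionality of $\mu\xi$ and $\xi^{-\be}$. You have simply spelled out the exponent bookkeeping and the equality case in more detail than the paper's one-line argument.
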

\begin{proof}
By H\"older's inequality
\begin{gather*}\int_0^1\mu^{\be/\be+1}=\int_0^1(\mu\xi)^{\be/\be+1}\xi^{-\be/\be+1}
\le \left(\int_0^1\mu \xi\right)^{\be/\be+1}\left(\int_0^1\xi^{-\be}\right)^{1/\be+1},\end{gather*}
and equality holds if an only if $\mu\xi^{\be+1}$ is constant.
\end{proof}

Henceforth we let $v$ be given by \eqref{circle}. Def\/ine
\begin{gather*}
 \xi_y(t) =\int_0^1\|y(s+t)-y(s)\|^2ds, \qquad \hat\xi(t)=|1-e^{2\pi i t}|^2,\\
c =\int_0^1\hat\xi^{-\sigma/2}dt=\int_0^1\frac{dt}{(2\sin(\pi t))^\sigma}=\frac{8\pi^2v^2}\sigma,
\qquad \mu=\frac{\hat\xi^{-\sigma/2-1}}c.
\end{gather*}
If $y$ is twice dif\/ferentiable, then the integral
\begin{gather*}\De^\mu y(t)=\int_0^1\mu(s)(2y(t)-y(t+s)-y(t-s))ds
=\int_0^1\frac{2y(t)-y(t+s)-y(t-s)}{c(2\sin(\pi s))^{2+\sigma}}ds\end{gather*}
converges since the numerator is $\mathcal O((s-b)^2)$ as $s\to b$ for $b=0,1$.

Note that $\De^\mu$ is linear and for $f_k(t)=a e^{2k\pi it}$ we have
\begin{gather*}\De^\mu f_k(t)=
a e^{2k\pi it}\int_0^1\frac{2-e^{2k\pi is}-e^{-2k\pi is}}{c(2\sin(\pi s))^{2+\sigma}}ds
=d_kf_k(t),\end{gather*}
where the integral
\begin{gather*}d_k=\int_0^1\frac{4\sin^2(k\pi s)}{c(2\sin(\pi s))^{2+\sigma}}ds\end{gather*}
converges since the integrand is $\mathcal O(|s-b|^{-\sigma})$ as $s\to b$ for $b=0,1$.

By Proposition~2 in~\cite{Barutello-Terracini}, for $k\ge 2$, $u\in (0,\pi)$ we have $\sin^2(ku)<k^2\sin^2u$ and so $ d_k<k^2d_1$.
\begin{Ob}
We write $a\in\C^d$ as $a=\operatorname{Re} a+ i\operatorname{Im} a$ and $\bar a=\operatorname{Re} a- i\operatorname{Im} a$.
\end{Ob}
\begin{Pro}\label{estima-cinetica}
For every $y\in \Lam$
\begin{gather}\label{EC}
\int_0^1\|\dot{y}\|^2 \ge 4\pi^2 \int_0^1\mu\xi_y,
\end{gather}
and the equality holds if only if $y(t)=a e^{2\pi i t}+\bar a e^{-2\pi i t}$.
\end{Pro}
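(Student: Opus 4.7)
My plan is to expand $y\in\Lam$ in a Fourier series and reduce~\eqref{EC} to a termwise comparison of two spectra. Since $y$ has zero mean, write
\begin{gather*}
y(t)=\sum_{k\ne 0}c_ke^{2\pi ikt},\qquad c_k\in\C^d,\qquad c_{-k}=\bar c_k,
\end{gather*}
and set $\|c_k\|^2:=\sum_{j=1}^d|c_k^{(j)}|^2$. Parseval immediately gives $\int_0^1\|\dot y\|^2=4\pi^2\sum_{k\ne 0}k^2\|c_k\|^2$. For the right-hand side, using $y(s+t)-y(s)=\sum_{k\ne 0}c_k(e^{2\pi ikt}-1)e^{2\pi iks}$ and the orthogonality of $\{e^{2\pi iks}\}$ in $s$ to kill the cross terms,
\begin{gather*}
\xi_y(t)=\sum_{k\ne 0}\|c_k\|^2\big|1-e^{2\pi ikt}\big|^2,\qquad \int_0^1\mu\xi_y=\sum_{k\ne 0}\|c_k\|^2d_k,
\end{gather*}
where the second identity is exactly the definition of $d_k$. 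Inequality~\eqref{EC} thus reduces to showing $\sum_{k\ne 0}(k^2-d_k)\|c_k\|^2\ge 0$.

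It remains to prove $d_k\le k^2$ for every $k\ne 0$, strictly for $|k|\ge 2$. The normalization of $c$ forces $d_1=1$: since $|1-e^{2\pi it}|^2=(2\sin\pi t)^2$ the integrand in $d_1$ collapses to $(c(2\sin\pi t)^\sigma)^{-1}$, whose integral equals~$1$ by the definition of $c$. For $|k|\ge 2$ the bound $\sin^2(k\pi s)<k^2\sin^2(\pi s)$ from~\cite{Barutello-Terracini} (recalled just before the proposition) gives $d_k<k^2d_1=k^2$, and evenness of $\sin^2$ yields $d_{-k}=d_k$. Consequently each term of the sum is non-negative, and equality holds exactly when $c_k=0$ for every $|k|\ge 2$, i.e., when $y(t)=ae^{2\pi it}+\bar ae^{-2\pi it}$ with $a=c_1\in\C^d$.

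The main technical subtlety is justifying the termwise integration, since $\mu(t)$ blows up like $|t|^{-\sigma-2}$ at $t\in\{0,1\}$. The Cauchy--Schwarz estimate $\xi_y(t)\le t^2\|\dot y\|_{L^2[0,1]}^2$, obtained from $y(s+t)-y(s)=\int_s^{s+t}\dot y$ and Fubini, shows that $\mu\xi_y=O(|t|^{-\sigma})$ near both endpoints, so the whole integrand is absolutely integrable and interchanging sum and integral is legitimate, either directly by Fubini on the non-negative double sum or by monotone convergence on partial Fourier sums.
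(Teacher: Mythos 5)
Your proof is correct, but it takes a genuinely more direct route than the paper's. The paper first establishes the identity $\int_0^1\De^\mu y\cdot y=\int_0^1\mu\xi_y$, then minimizes the Rayleigh quotient $J(y)=\int_0^1\|\dot y\|^2\big/\int_0^1\De^\mu y\cdot y$ by the direct method (minimizing sequence, weak convergence, lower semicontinuity on the constraint set $M=\big\{y\in\Lam \,\big|\, \int_0^1\De^\mu y\cdot y=1\big\}$), and finally identifies the minimal value with the first eigenvalue of the problem $-\ddot y=\lam\De^\mu y$, which it computes by exactly the Fourier diagonalization you use: $\lam_k=4\pi^2k^2/d_k$, together with $d_k<k^2d_1$ for $k\ge2$. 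You bypass the existence step entirely: Parseval applied to $\int_0^1\|\dot y\|^2$ and, pointwise in $t$, to $\xi_y(t)$ reduces \eqref{EC} to the termwise inequality $\sum_{k\ne0}(k^2-d_k)\|c_k\|^2\ge0$, which rests on the same two facts ($d_1=1$ from the normalization of $c$, and $d_k<k^2$ for $|k|\ge2$ from \cite{Barutello-Terracini}). What your route buys: no weak compactness, no need to verify that $M$ is weakly closed (the paper asserts this without proof), the equality case falls out immediately from strict positivity of $k^2-d_k$ for $|k|\ge2$, and you address the sum--integral interchange, which the paper leaves implicit. One small repair is needed in your technical paragraph: the bound $\xi_y(t)\le t^2\|\dot y\|_{L^2}^2$ controls $\mu\xi_y$ near $t=0$ but \emph{not} near $t=1$, where $\mu(t)\sim c^{-1}\big(2\pi(1-t)\big)^{-\sigma-2}$; to get decay there you need the symmetry $\xi_y(1-t)=\xi_y(t)$, which follows from the $1$-periodicity of $y$. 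In fact the integrability discussion is dispensable altogether: since $\mu\ge0$ and every Parseval summand is nonnegative, Tonelli's theorem justifies $\int_0^1\mu\xi_y=\sum_{k\ne0}\|c_k\|^2d_k$ unconditionally (both sides possibly $+\infty$), and finiteness of the right-hand side then follows from your termwise bound and $\sum_{k\neq 0}k^2\|c_k\|^2<\infty$ for $y\in H_1^1$.
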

\begin{proof}
\begin{align*}
\int_0^1 \De^\mu y \cdot y
&=\int_0^1\mu(s)\int_0^1 (2\|y(t)\|^2- y(t)\cdot y(t+s)-y(t)\cdot y(t-s))dt ds\\
&= \int_0^1\mu(s)\int_0^1 (2\|y(t)\|^2-2y(t)\cdot y(t+s))dt ds\\
&= \int_0^1\mu(s)\int_0^1\|y(t)-y(t+s)\|^2dt ds= \int_0^1 \mu\xi_y.
\end{align*}
Observe that the inequality \eqref{EC} holds true for $y$ constant. We can then assume that $\xi_y$ never vanishes. Our aim is to show that the functional
\begin{gather*}
J(y):=\frac{\int_0^1 \|\dot{y}\|^2}{\int_0^1\De^\mu y \cdot y}
\end{gather*}
def\/ined on $\Lam$ attains its inf\/imum and that its minimal value is $4\pi^2$. Since the functional $J$ is homogeneous of degree zero, minimizing $J$ is equivalent to minimizing the coercive functional $\tilde J(y)=\int_0^1\|\dot{y}(t)\|^2dt$ on the constraint
\begin{gather*}
M=\left\{y \in \Lam\,\big| \int_0^1 \De^\mu y \cdot y dt =1\right\},
\end{gather*}
which is closed with respect to the weak $H_1^1$-topology. Let $(y_n)_n$ be a minimizing sequence, then $\int_0^1\|\dot{y}_n\|^2dt=\|\dot{y}_n\|_{L^2}$ is bounded. By lower semi-continuity of the norm we can select a~subsequence $(y_{n_k})$, weakly convergent to $y \in M$ and obtain
\begin{gather*}
\tilde{J}(y)\leq\liminf_{n_k}\tilde{J}(y_{n_k}).
\end{gather*}
So we can state that the minimum of $\tilde J$ exist. The minimal value of $\tilde J$ corresponds to the f\/irst eigenvalue $\lam_{\min}$ for the problem
\begin{gather}\label{problem-constraint}
 -\ddot{y}=\lambda \De^\mu y, \qquad  y\in H_1^1\big(\R,\R^d\big), \qquad \int_0^1 y=0.
\end{gather}
To study problem \eqref{problem-constraint} write a solution as
\begin{gather*}y(t)=\sum_{k=-\infty}^\infty a_k e^{2k\pi i t},\qquad a_0=0, \qquad a_{-k}=\bar a_k\end{gather*}
to get
\begin{gather*}\ddot y(t)=-\sum_{k=-\infty}^\infty 4\pi k^2a_k e^{2k\pi i t}, \qquad \De^\mu y(t)=\sum_{k=-\infty}^\infty a_k d_ke^{2k\pi i t},\end{gather*}
so that the eigenvalues of the problem~\eqref{problem-constraint} are $\lam_k=4\pi^2k^2/d_k$, $k\in\N$. Thus $\lam_{\min}=\lam_1=4\pi^2$ and the minimum is attained if and only if $y(t)=a e^{2\pi i t}+\bar a e^{-2\pi i t}$.
\end{proof}

\begin{Teo}\label{circle-min}
The absolute minimum of $\cA^\sigma$ on $\Lam$, with $0<\sigma<1$ and $v$ given by \eqref{circle}, is attained at and only at a unit circle
\begin{gather}\label{eq:1}
 y(t)=ae^{2\pi i t}+\bar a e^{-2\pi i t}=E_1\cos(2\pi t)+ E_2 \sin(2\pi t),
\end{gather}
where $E_1=2\operatorname{Re}(a)$, $E_2=2\operatorname{Im}(a)$ are orthogonal unit vectors in $\R^d$.
\end{Teo}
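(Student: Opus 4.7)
The plan is to bound $\cA^\sigma(y)$ from below by an expression in the single scalar $X:=\int_0^1\mu\,\xi_y$, to minimize in $X$, and then to trace equality through all the steps to conclude that $y$ is the unit circle. Three ingredients combine: Jensen's inequality on the potential, Proposition~\ref{holder}, and the kinetic bound Proposition~\ref{estima-cinetica}.

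First I would rewrite the double integral in the potential term as $\int_0^1\int_0^1\|y(s+t)-y(s)\|^{-\sigma}ds\,dt$, using translation invariance of the periodic $s$-integral, and apply Jensen's inequality to the convex function $x\mapsto x^{-\sigma/2}$ inside the $ds$-integral, obtaining
\begin{gather*}
\int_0^1\int_0^1\frac{dr\,ds}{\|y(s)-y(r)\|^\sigma}\ \ge\ \int_0^1\xi_y(t)^{-\sigma/2}dt,
\end{gather*}
with equality iff $\|y(s+t)-y(s)\|$ is independent of $s$ for each $t$.

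Next I would apply Proposition~\ref{holder} with $\be=\sigma/2$ and $\xi=\xi_y$. The definition $\mu=\hat\xi^{-\sigma/2-1}/c$ with $c=\int_0^1\hat\xi^{-\sigma/2}$ is tailored so that $\bigl(\int_0^1\mu^{\be/(\be+1)}\bigr)^{\be+1}=c$, and hence $\int_0^1\xi_y^{-\sigma/2}\ge c\,X^{-\sigma/2}$, with equality iff $\xi_y/\hat\xi$ is constant. Combining with~\eqref{EC} yields the one-variable estimate
\begin{gather*}
\cA^\sigma(y)\ \ge\ 2\pi^2v^2\,X+\frac{c}{2}X^{-\sigma/2},
\end{gather*}
and the identity $c\sigma=8\pi^2v^2$ from~\eqref{circle} locates the minimum of the right-hand side at $X=1$ by elementary calculus, yielding the absolute lower bound $2\pi^2v^2(\sigma+2)/\sigma$ for~$\cA^\sigma$.

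The hard part will be the equality characterization. Equality in Proposition~\ref{estima-cinetica} already forces $y(t)=E_1\cos(2\pi t)+E_2\sin(2\pi t)$, but leaves $E_1,E_2\in\R^d$ unrestricted. Using the trigonometric identity $y(s+t)-y(s)=2\sin(\pi t)\bigl[-E_1\sin(\pi(2s+t))+E_2\cos(\pi(2s+t))\bigr]$, the Jensen-equality constraint (independence of $s$) forces $\|E_1\|=\|E_2\|$ and $E_1\cdot E_2=0$. A short computation then gives $\xi_y=\|E_1\|^2\hat\xi$, so the normalization $X=\|E_1\|^2=1$ forces $\|E_1\|=1$; this is precisely~\eqref{eq:1}. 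Conversely, the unit circle saturates all three inequalities by a direct check.
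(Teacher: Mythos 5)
Your proposal is correct and follows essentially the same route as the paper: Jensen's inequality on the potential term, Proposition~\ref{holder} with $\mu$ tailored to $\hat\xi$, the kinetic bound of Proposition~\ref{estima-cinetica}, and one-variable minimization of $X\mapsto 2\pi^2v^2X+\tfrac{c}{2}X^{-\sigma/2}$ at $X=1$ (the paper packages the same chain as $\cA^\sigma\ge\tcA^\sigma\ge\bcA^\sigma$ rather than collapsing it to a single scalar bound). Your explicit equality analysis via the identity $y(s+t)-y(s)=2\sin(\pi t)\bigl[-E_1\sin(\pi(2s+t))+E_2\cos(\pi(2s+t))\bigr]$ usefully fills in the step the paper dismisses with ``it is not dif\/f\/icult to see'' that $E_1$, $E_2$ are orthogonal unit vectors.
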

\begin{proof} By Jennsen's inequality
 \begin{gather*}\xi_y(t)^{-\sigma/2}=\left(\int_0^1\|y(s+t)-y(s)\|^2ds\right)^{-\sigma/2}\le \int_0^1\frac{ds}{\|y(s+t)-y(s)\|^\sigma},\end{gather*}
and equality holds if and only if $\xi_y(t) =\|y(t)-y(0)\|=\|y(s+t)-y(s)\|$ for any~$s$. Thus
\begin{gather}\label{cAjen}
\cA^\sigma(y)\ge \int_0^1 \frac {v^2}2\|\dot{y}\|^2 +\frac 12\int_0^1\xi_y^{-\sigma/2}.
\end{gather}
Def\/ine
\begin{gather*}
\tcA^\sigma(y):=\int_0^1 \frac {v^2}2\|\dot{y}\|^2 +\frac{4\pi^2v^2}\sigma\left(\int_0^1\mu\xi_y\right)^{-\sigma/2}.
\end{gather*}
By Proposition \ref{holder}, the minimum of the functional
\begin{gather*}\Phi(\xi)=\left(\int_0^1\mu \xi\right)^{\sigma/2}\int_0^1\xi^{-\sigma/2}\end{gather*}
is attained at $\xi$ if and only if $\xi$ is proportional to $\hat\xi$ and its value is $c=8\pi^2v^2/\sigma$, which together with~\eqref{cAjen} gives
\begin{gather}\label{cAtcA}
\cA^\sigma(y) \ge\tcA^\sigma(y),
\end{gather}
and equality in \eqref{cAtcA} holds at a circle{\samepage
\begin{gather*}y(t)=ae^{2\pi i t}+\bar a e^{-2\pi i t}=E_1\cos(2\pi t)+ E_2 \sin(2\pi t),\end{gather*}
where $E_1=2\operatorname{Re}(a)$, $E_2=2\operatorname{Im}(a)$ are orthogonal vectors in $\R^d$ of the same length.}

Def\/ine
\begin{gather*}\bcA^\sigma(y):=2\pi^2v^2 \left(\int_0^1\mu\xi_y +\frac 2\sigma\left(\int_0^1\mu\xi_y \right)^{-\sigma/2}\right).\end{gather*}
By Proposition \ref{estima-cinetica} we have
\begin{gather}\label{tcAbcA}
\tcA^\sigma(y) \ge\bcA^\sigma(y),
\end{gather}
and the equality in \eqref{tcAbcA} holds if only if $y(t)=a e^{2\pi i t}+\bar a e^{-2\pi i t}$.

The function $g\colon \R^+\to\R$, $g(u)=u+\frac 2\sigma u ^{-\sigma/2}$ has a unique minimum at $u=1$, therefore~$\bcA^\sigma$ attains its absolute minimum $2\pi^2v^2(1+2/\sigma)$ at functions $y\in\Lam$ with $\int_0^1\mu\xi_y=1$, among others at a unit circle~\eqref{eq:1}, and at a unit circle all $\cA^\sigma$, $\tcA^\sigma$, $\bcA^\sigma$ coincide. Conversely, if~$\cA^\sigma$ attains its minimum at $y\in\Lam$ then $\int_0^1\mu\xi_y=1$ and \eqref{cAtcA},
\eqref{tcAbcA} must be equalities. Thus $\xi_y(t) =\|y(t)-y(0)\|=\hat\xi(t)$ and $y(t)=a e^{2\pi it}+\bar a e^{-2\pi it}$. It is not dif\/f\/icult to see that $E_1=2\operatorname{Re}(a)$, $E_2=2\operatorname{Im}(a)$ are orthogonal unit vectors in~$\R^d$.
 \end{proof}

\section{Open problems}
It is possible that imposing additional symmetries beyond the choreography condition, could give some other critical points of the action besides the absolute minimizers. One also could impose some topological non-trivial condition to the curve def\/ining the continuous coreography. Connections with other topics such as vortex lines in f\/luid dynamics might also be of interest.

\subsection*{Acknowledgement}
We would like to thank R.~Montgomery and C.~Garc\'ia Azpeitia for pointing out mistakes in earlier versions of the paper. R.~Castaneira is grateful to R.~Montogomery for all his support to visit him at UC Santa Cruz. The authors thank the referees for useful observations.

\pdfbookmark[1]{References}{ref}
\LastPageEnding

\end{document}